\def\subsubsection{\@startsection{subsubsection}{3}%
  \z@{.5\linespacing\@plus.7\linespacing}{.1\linespacing}%
  {\normalfont\itshape}}
\newtheorem{thm}{Theorem}[section]
\newtheorem{conjecture}[thm]{Conjecture}
\newtheorem{question}[thm]{Question}
\newcommand{\ignore}[1]{}
\tikzset{
    dot/.style 2 args={fill, circle, inner sep=1pt, 
    }
}
\begin{document}

\author{Rebekah Herrman}
\address[Rebekah Herrman]{Department of Industrial and Systems Engineering, The University of Tennessee Knoxville, Knoxville, TN}
\email[Rebekah Herrman]{rherrma2@utk.edu}

\author{Grace Wisdom}
\address[Grace Wisdom]{Department of Mathematics, The University of Tennessee Knoxville, Knoxville, TN}
\email[Grace Wisdom]{ewisdom@vols.utk.edu}

\title[Leaky forcing and resilience of Cartesian products of $K_n$] 
{Leaky forcing and resilience of Cartesian products of $K_n$}

\linespread{1.3}
\pagestyle{plain}

\begin{abstract}
Zero forcing is a process on a graph $G = (V,E)$ in which a set of initially colored vertices,$B_0(G) \subset V(G)$, can color their neighbors according to the color change rule. The color change rule states that if a vertex $v$ can color a neighbor $u$ if $u$ is the only uncolored neighbor of $v$. If a vertex $v$ colors its neighbor, $u$, $v$ is said to force $u$. Leaky forcing is a recently introduced variant of zero forcing in which some vertices cannot force their neighbors, even if they satisfy the color change rule. This variation has been studied for limited families of graphs with particular structure, such as products of paths and discrete hypercubes. A concept closely related to $\ell$-leaky forcing is $\ell$-resilience. A graph is said to be $\ell$-resilient if its $\ell$-leaky forcing number equals its zero forcing number. In this paper, we prove direct products of $K_n$ with $P_t$ and $K_n$ with $C_t$ is 1-resilient and conjecture the former is not 2-resilient.
\end{abstract}
\maketitle 

\textbf{Keywords:} Leaky forcing; Cartesian product; Complete graphs; Resilience

\textbf{AMS subject classification:} 68R10, 05C50, 05C57

\section{Introduction}



Zero forcing is a vertex coloring process that has origins in quantum control theory \cite{burgarth2007full} and applications to numerous fields such as network controllability \cite{monshizadeh2014zero, trefois2015zero, mousavi2018null}. In the zero forcing process on a graph $G = (V,E)$, a subset of vertices $B \subset V(G)$ is initially colored blue. A blue vertex $v$ can color an uncolored neighbor $u$, or $v$ can \textit{force} $u$, if $u$ is the only uncolored neighbor of $v$. This is known as the \textit{color change rule}. If the set $B$ can eventually force all vertices of $G$, we say that $B$ is a \textit{zero forcing set}. The zero forcing number of $G$ is denoted $Z(G)$ and is equal to the size of the smallest cardinality zero forcing set. Previous work in zero forcing includes determining how long it takes for a zero forcing set to infect a graph \cite{hogben2012propagation, butler2013throttling}, degree-based bounds for the zero forcing number \cite{gentner2018some, davila2018lower}, and approximation algorithms and machine learning models for finding zero forcing numbers and sets \cite{cameron2024approximation, ahmad2023graph}.

In 2019, Dillman, and Kenter introduced $\ell$-leaky forcing, a variation of zero forcing in which $\ell$ vertices do not enforce the color change rule \cite{dillman2019leaky}. $Z_{(\ell)}(G)$ is used to denote the $\ell$-leaky forcing number, which is the size of a smallest cardinality set that can force $G$ even with the addition of $\ell$ leaks. We shall call the set $\ell$-leaky forcing vertices $B_{\ell}$. Zero forcing is equivalent to $\ell$-leaky forcing when $\ell=0$, thus, throughout this work we will use $Z_0(G)$ to denote the zero forcing number of $G$ and $B_0$ to denote a minimum zero forcing set. Recent $\ell$-leaky forcing work includes exactly determining the $\ell$-leaky forcing number (or proving upper bounds) for some families of graphs \cite{herrman2022d}, different variations of $\ell$-leaky forcing \cite{elias2023leaky, alameda2020generalizations}, applications in computer science \cite{abbas2023resilient}, and the introduction of \textit{$\ell$-resilience} \cite{alameda2022leaky}. A graph $G$ is said to be $\ell$-resilient if its $\ell$-leaky forcing number is equal to its zero forcing number. 

Products of graphs have gained significant attention in zero forcing \cite{huang2010minimum, karst2020blocking, cameron2023forts, brevsar2017grundy, leclair2024zero} and other graph infection processes such as bootstrap percolation \cite{brevsar2024bootstrap, gao2015bootstrap, gravner2017bootstrap, hedvzet20233}. In this work, we first calculate the $1$-leaky forcing number of direct products of $K_n$ with $P_t$, $C_t$, and itself in Sec.~\ref{sec:directprod}. The \textit{direct product} of two graphs $G$ and $H$, denoted $G \times H$, is a graph with vertex set $V(G) \times V(H)$. Two vertices $(g,h)$ and $(g',h')$ form an edge in $G \times H$ if and only if $gg' \in E(G)$ and $hh' \in E(H)$. In the same section, we conjecture that $K_n \times P_t$ is not $2$-resilient.


In Sec.~\ref{sec:conclusion}, we summarize the main results of this work and introduce related open questions.

\section{$1$-resilience of direct products including $K_n$}\label{sec:directprod}
We first consider the $1$-leaky forcing number of paths on $t$ vertices, $P_t$, with complete graphs on $n$ vertices, $K_n$. 
\begin{thm}\label{thm:pathcomplete}
   $K_n \times P_t$ is $1$-resilient. Specifically,
        \[Z_1(K_n \times P_t) = Z_0(K_n \times P_t) = \begin{cases} 
           (n-2)t & t \; \mathrm{even}, \; n \geq 3 \\
           (n-2)t + 2 & t \; \mathrm{odd}, \; n\geq 2
            \end{cases}.
    \]
\end{thm}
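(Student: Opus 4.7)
The plan has three stages. First, for the lower bound $Z_0(K_n \times P_t) \ge (n-2)t$ (with an extra $+2$ when $t$ is odd), the key lemma is that $|U_i| \le 2$ for every layer $i$, where $U_i$ denotes the set of columns $c$ such that $(c, i)$ is uncolored. Consider the first time any $(a, i) \in U_i$ is forced, by some $(x, j)$ with $j = i \pm 1$, and let $i'$ denote the other adjacent layer of $(x, j)$. For $(a, i)$ to be the unique uncolored neighbor of $(x, j)$, we need $U_i \setminus \{x\} = \{a\}$ and $U_{i'} \setminus \{x\} = \emptyset$ at that time; the first already forces $|U_i| \le 2$, and because this is the first force in $U_i$ the current $U_i$ coincides with the original. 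Summing over all $t$ layers yields $Z_0 \ge (n-2)t$.

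For $t$ odd, a parity refinement is needed. When $t \ge 5$, apply the same observation to the first force in any odd layer $i_0$: the other-side adjacent layer $i'$ of the forcer is odd, and since no odd-layer vertex has yet been forced, $|U_{i'}| \le 1$ holds originally, so at least one odd layer has $|U| \le 1$. The symmetric argument on even layers (which requires at least two even layers to apply, guaranteed when $t \ge 5$) gives some even layer with $|U| \le 1$, yielding $|U| \le 2t - 2$. The case $t = 3$ requires an additional fort-based argument: the open-twin pair $\{(c, 1), (c, 3)\}$ is a fort for each column $c$, forcing $U_1$ and $U_3$ to be disjoint, and the size-$3$ forts $\{(a, 1), (b, 3), (c, 3)\}$ with $a, b, c$ distinct (plus its mirror) together imply $|U_1| + |U_3| \le 2$.

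For the matching upper bound, I would construct an explicit 1-leaky forcing set. Take $U_i = \{1, 2\}$ for $i$ odd and $U_i = \{1, 3\}$ for $i$ even (so column $1$ is uncolored throughout), with $B$ the complement; for $t$ odd, additionally set $U_1 = \emptyset$, which increases $|B|$ by $2$. I would first verify $B$ is a zero forcing set by tracing the cascade: boundary vertices such as $(3, 1)$ have exactly one uncolored neighbor in layer 2, namely $(1, 2)$, so they force immediately and initiate propagation inward from both ends of the path. Then I would verify $B$ is 1-leaky by case analysis on the leak location, grouping leaks by layer parity and column; the redundancy built in by keeping column $1$ uncolored at every layer ensures each uncolored vertex admits multiple potential forcers as the process evolves, so losing any single one to a leak still allows the cascade to complete. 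Combined with the trivial bound $Z_0 \le Z_1$, this gives $Z_0 = Z_1$ equal to the claimed formula. The main obstacle is the 1-leaky case analysis: leaks that would break the most natural forcing chain (for instance, the unique initial forcer of some vertex in the ordinary zero-forcing run) must be handled by exhibiting an alternative propagation order, typically by appealing to forces originating from the opposite boundary or from the column-$1$ uncolored vertices.
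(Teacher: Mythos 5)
Your architecture differs from the paper's in one important way: the paper does not prove the lower bound at all --- it cites Benson et al.\ for the exact value of $Z_0(K_n\times P_t)$ and then, using $Z_0\le Z_1$, only has to exhibit a $1$-leaky forcing set of that size. You instead re-derive the lower bound, and this is where the concrete error lies. Your even-$t$ argument (every $P_t$-layer contains at most two initially uncolored vertices, proved by examining the first force into that layer) is fine. But the parity refinement for odd $t\ge 5$ is false as stated. You claim some even layer must satisfy $|U|\le 1$; the symmetric argument only works when the forcer of the first force into an even layer sits at an \emph{interior} odd layer, so that it has a second (even) neighbour layer $i'$ to which the ``$U_{i'}\subseteq\{x\}$'' observation applies. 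When $t$ is odd, layers $1$ and $t$ are odd endpoints, and the first force into an even layer can perfectly well be performed from layer $1$ (or $t$), which has no other neighbour layer; then you only recover $|U_2|\le 2$, which you already knew. Indeed your own odd-$t$ construction is a counterexample to the claimed structural statement: there every even layer has exactly two uncolored vertices, and the entire saving of $2$ comes from layer $1$ being fully colored. So the $+2$ in the odd case is not established by your argument (your $t=3$ fort argument is correct, but $t\ge 5$ is not). The clean fix is simply to cite the known value of $Z_0$, as the paper does.

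On the upper bound, your construction ($U_i=\{1,2\}$ for odd layers, $U_i=\{1,3\}$ for even layers, plus a fully colored end layer when $t$ is odd) is different from, and more uniform than, the paper's explicit sets, and it behaves correctly in the small cases I traced: the cascade propagates inward from both ends, and a leak that disables the unique forcer of some vertex is repaired once the cascade from the opposite end fully colors an adjacent layer, exactly the mechanism the paper exploits. However, the $1$-leaky verification is the entire content of the theorem, and you have only promised a case analysis rather than carried one out; the paper's proof spends essentially all of its length on precisely this analysis (leak in an end column versus an interior column, in each parity subcase). As it stands the proposal contains a provably false step in the odd-$t$ lower bound and leaves the core upper-bound argument unexecuted, so it is not yet a proof.
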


\begin{proof}
Draw $K_n \times P_t$ such that vertices are arranged in $n$ rows and $t$ columns. Label vertices such that the top left is $(1,1)$, bottom left is $(n,1)$, the top right is $(1,t)$ and the bottom right is $(n,t)$. In general, the first coordinate of each vertex label refers to the row and the second coordinate refers to the column.
Benson, Ferrero, Flagg, Furst, Hogben, Vasilevska, and Wissman \cite{benson2018zero} showed that
\[Z_0(K_n \times P_t) = \begin{cases} 
           (n-2)t & t \; \mathrm{even}, \; n\geq 3 \\
           (n-2)t + 2 & t \; \mathrm{odd}, n \geq 2
            \end{cases}.
    \]
 Since $Z_{(0)}(G) \leq Z_{(1)}(G),$ we need only construct sets of the appropriate sizes to show the result.

\noindent \textbf{Case 1: $t$ even.}
 We consider two subcases: $n$ even and $n$ odd.

\noindent \textbf{Subcase 1.1: n even.} 

Let the initially infected set be vertices $\{(i,1)\}_{i =3}^n \cup \{(j, t)\}_{j=1}^{n-2}  \cup \{(f, 2)\}_{f \notin \{1,n\}} \cup \{ (m, 2k) \}_{k \notin \{1, \frac{t}{2}\}, m \notin \{\frac{n}{2}, \frac{n}{2}+1\}} \cup \{(q,2p+1 )\}_{p \neq 0, q \notin \{1,n\}}$. Note that this set has size $(n-2)t$ as each of the $t$ columns contains exactly $n - 2$ initially infected vertices. See Fig.~\ref{fig:Theorem-2.1.1} for an example of an initially infected set.

There are two subcases to consider: there is a leak in column $1$ or $t$, or there is a leak in what we will call an ``interior column", that is a column that is not column $1$ or $t$.

\textbf{Subcase 1.1.1: Leak in column $1$ or $t$.}

Suppose there is a leak in column $1$, without loss of generality. Then the forcing process starts from column $t$. The vertex $(1,t)$ forces $(n,t-1)$, and once this is forced, $(2,t)$ forces $(1, t-1)$. Note then that $(1, t-2)$ forces $(n, t-3)$ and $(n, t-2)$ forces $(1, t-3)$. Note that since there are no leaks until the first column, this process iterates until column 1 is forced, with even indexed columns $2p$ forcing odd indexed columns $2p-1$. Once all odd columns have been forced, note that there is at most $1$ leak in column 1. This means either $(1,1)$ can force $(n,2)$ or $(1,n)$ can force $(1,2)$, or both. Once either $(n,2)$ or $(1,2)$ is forced, the other is forced by any vertex $(1,j)$ for $j \in \{2, 3, \ldots , n-1\}$. Since columns 1 through 3 have all been forced, column 3 can then force column 4, and odd indexed columns $2q+1$ force even indexed columns $2(q+1)$ until all vertices have been forced. If there is instead a leak in column $t$, a similar process occurs where odd-indexed columns force even-indexed columns until column $t$ is forced, and then even-indexed columns force odd-indexed ones until the entire graph is forced.

\textbf{Subcase 1.1.2: Leak in an interior column $h$.}

Suppose there is a leak in some interior column. Without loss of generality, suppose the column with a leak occurs in the first $\frac{t}{2}$ columns at index $h$. As in the previous case, odd-indexed columns will force even-indexed columns (from the direction of column $1$), and even-indexed columns will force odd indexed ones (from the direction of column $t$), until column $h$ is reached. If $h$ is odd, $h$ is forced by $h+1$ and column $h-1$ was forced by column $h-2$. If $h$ is even, $h$ was forced by column $h-1$ and column $h+1$ was forced by column $h+2$. In both cases, two columns that are next to each other have been completely forced. In the former case, column $h-1$ can force $h-2$, allowing column $h-3$ to force $h-4$ and so on until all vertices in columns left of $h$ are forced. Similarly, column $h$ can force at least one vertex in column $h+1$. Since column $h$ has at most one leak, when one of the uncolored vertices of column $h+1$ is forced, the other is then also forced by a vertex in $h$, as every vertex in $h$ now has at most one uncolored neighbor. This allows column $h+2$ to force $h+3$ and $h+4$ to force $h+5$ until column $t$ is forced. Thus, all veretices are forced. The case when $h$ is even is analogous.

\begin{figure}
    \centering
    \begin{tikzpicture} [scale=2.75]
    \begin{scope} [every node/.style={scale=.75,circle,draw}]
    \begin{scope} [every node/.style={scale=.75,circle,draw, fill=blue}]
    
        \node (A) at (0, 0) {};
        \node (B) at (0, 1) {};
        \node (F) at (1, 1) {};
        \node (G) at (1, 2) {};
        \node (J) at (2, 1) {};
        \node (K) at (2, 2) {};
        \node (P) at (3, 3) {};
        \node (M) at (3, 0) {};
        \node (R) at (4, 1) {};
        \node (S) at (4, 2) {};
        \node (W) at (5, 2) {};
        \node (X) at (5, 3) {};
        
    \end{scope}
    
        \node (A) at (0, 0) {};
        \node (B) at (0, 1) {};
        \node (C) at (0, 2) {};
        \node (D) at (0, 3) {};

        \node (E) at (1, 0) {};
        \node (F) at (1, 1) {};
        \node (G) at (1, 2) {};
        \node (H) at (1, 3) {};

        \node (I) at (2, 0) {};
        \node (J) at (2, 1) {};
        \node (K) at (2, 2) {};
        \node (L) at (2, 3) {};

        \node (M) at (3, 0) {};
        \node (N) at (3, 1) {};
        \node (O) at (3, 2) {};
        \node (P) at (3, 3) {};

        \node (Q) at (4, 0) {};
        \node (R) at (4, 1) {};
        \node (S) at (4, 2) {};
        \node (T) at (4, 3) {};

        \node (U) at (5, 0) {};
        \node (V) at (5, 1) {};
        \node (W) at (5, 2) {};
        \node (X) at (5, 3) {};

        \draw (A) -- (F);
        \draw (A) -- (G);
        \draw (A) -- (H);

        \draw (B) -- (E);
        \draw (B) -- (G);
        \draw (B) -- (H);

        \draw (C) -- (H);
        \draw (C) -- (F);
        \draw (C) -- (E);

        \draw (D) -- (G);
        \draw (D) -- (F);
        \draw (D) -- (E);

        \draw (H) -- (K);
        \draw (H) -- (J);
        \draw (H) -- (I);

        \draw (G) -- (L);
        \draw (G) -- (J);
        \draw (G) -- (I);

        \draw (F) -- (L);
        \draw (F) -- (K);
        \draw (F) -- (I);

        \draw (E) -- (L);
        \draw (E) -- (K);
        \draw (E) -- (J);

        \draw (L) -- (O);
        \draw (L) -- (N);
        \draw (L) -- (M);

        \draw (K) -- (P);
        \draw (K) -- (N);
        \draw (K) -- (M);

        \draw (J) -- (P);
        \draw (J) -- (O);
        \draw (J) -- (M);

        \draw (I) -- (P);
        \draw (I) -- (O);
        \draw (I) -- (N);

        \draw (P) -- (S);
        \draw (P) -- (R);
        \draw (P) -- (Q);

        \draw (O) -- (T);
        \draw (O) -- (R);
        \draw (O) -- (Q);

        \draw (N) -- (T);
        \draw (N) -- (S);
        \draw (N) -- (Q);

        \draw (M) -- (T);
        \draw (M) -- (S);
        \draw (M) -- (R);

        \draw (T) -- (W);
        \draw (T) -- (V);
        \draw (T) -- (U);

        \draw (S) -- (X);
        \draw (S) -- (V);
        \draw (S) -- (U);

        \draw (R) -- (X);
        \draw (R) -- (W);
        \draw (R) -- (U);

        \draw (Q) -- (X);
        \draw (Q) -- (W);
        \draw (Q) -- (V);
        
    \end{scope}
    \end{tikzpicture}
    \caption{$B_1$ for $K_4 \times P_6$ consists of the blue vertices.}
    \label{fig:Theorem-2.1.1}
\end{figure}
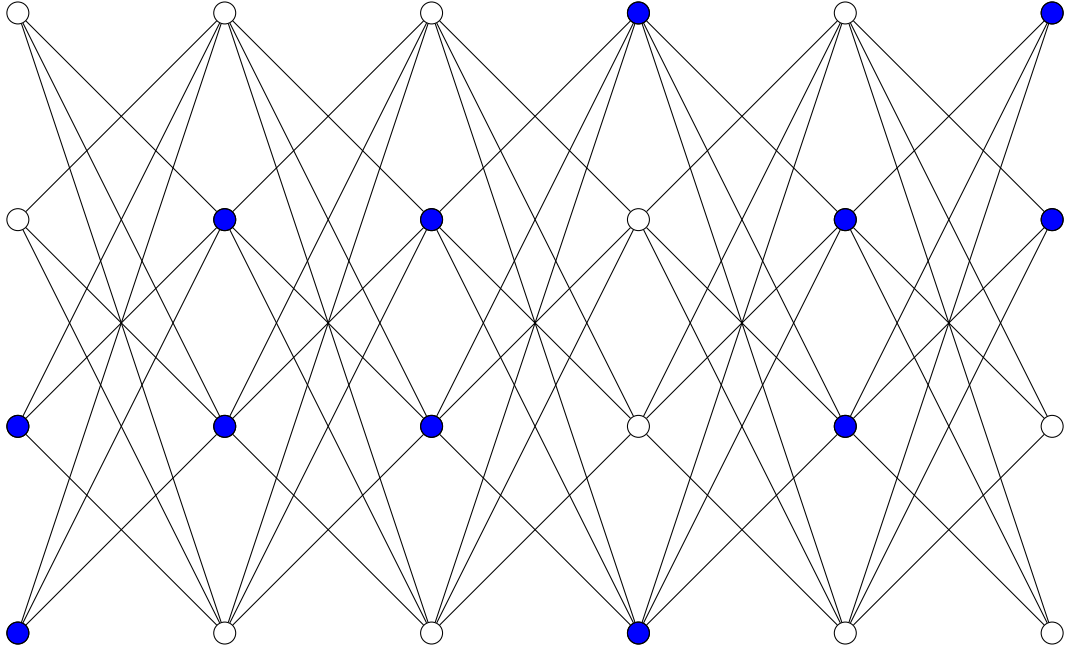

\noindent \textbf{Subcase 1.2: $n$ odd.}

 Let the initially infected set be vertices $\{(j, 1)\}_{j = 3}^n \cup \{(j, 2)\}_{j = 2}^{n - 1} \cup \{(j, i)\}_{i = 2k + 1, j = 2}^{j = n - 1} \cup \{(j, i)\}_{i = 4k, j \notin \{n - 1, n - 2\}} \cup \{(j, i)\}_{i = 4k + 2, i \neq t, j \notin \{2, 3\} }\cup \{(j, t)\}_{j=1}^{n-2}$. Note that this set has size $(n-2)t$ since each of the $t$ columns has $n-2$ initially infected vertices. 

    There are 2 cases to consider: The leak is in column $1$ or $t$ or the leak is in some interior column. 

    \textbf{Subcase 1.2.1: Leak in column $1$ or $t$.}
    
    Without loss of generality, assume the leak is in column $t$. Vertex $(n, 1)$ colors $(1, 2)$. Then any vertex in column $1$ forces $(t, 2)$. Then vertices $(n-2, 3)$ and $(n-1, 3)$ color vertices $(n-1, 4)$ and $(n-2, 4)$, respectively. 

    This coloring pattern repeats with vertices $\{(j, i)\}_{i = 4k - 1, j \in \{n - 1, n - 2\}}$ forcing vertices $\{(j, i)\}_{i = 4k, j \in \{n - 1, n - 2\}}$ and vertices $\{(j, i)\}_{i = 4k + 1, j \in \{2, 3\}}$ forcing vertices $\{(j, i)\}_{i = 4k + 2, j \in \{2, 3\}}$ until all vertices in even-indexed columns except column $t$ are forced. Then vertex $(n - 1, t - 1)$ forces $(n, t)$ and $(n,t-1)$ forces $(n-2,t)$. 
    
    Next, at least one of vertices $(1, t)$ or $(n, t)$ forces $(n, t-1)$ or $(1, t-1)$, respectively. Once one of these is colored, the other may be colored by some other vertex in column $t$. Once columns $t$ and $t-1$ are forced, vertices of the form $(1,2p)$ force vertices $(n,2p-1)$ and $(n,2p)$ force $(1,2p-1)$ in order of decreasing $p$ for all $p$ until only vertices $(1,1)$ and $(2,1)$ remain uncolored.  However, $(2,2)$ and $(1,2)$, respectively, force them. A similar, but mirrored argument holds if the leak is in column $1$. 

    \textbf{Subcase 1.2.2: Leak in an interior column $h$.} 

    Similar to Subcase 1.1.2, even columns force odd ones from the left and odd columns force even from the right until either columns $h-1$ and $h$ are forced ($h$ odd) or $h$ and $h+1$ are forced ($h$ even). Either $h-1$ or $h+1$, depending on the case, can force all vertices in $h-2$ or $h+2$, respectively, allowing vertices in columns $h-3$ or $h+3$ to force $h-4$ or $h+4$. This process repeats until all vertices to the left or right of column $h$ are forced.
    
    Column $h$ can force at least one vertex in the column next to it that is not forced, however once one of these vertices are forced, the other is too, as all but one vertex in column $h$ is in the neighborhood of the remaining unforced vertex. Once this entire column is forced, the remaining uncolored vertices are then forced column by column.
    



    \noindent \textbf{Case 2: $t$ odd.}
    We consider two subcases: $n$ even and $n$ odd.

    \noindent \textbf{Subcase 2.1: $n$ even.}
    
    Let the initially infected set be vertices $\{(i,1)\}_{i =3}^n \cup \{(j, t-1)\}_{j=1}^{n-2}  \cup \{(f, 2)\}_{f \notin \{1,n\}} \cup \{ (m, 2k) \}_{k \notin \{1, \frac{t-1}{2}\}, m \notin \{\frac{n}{2}, \frac{n}{2}+1\}} \cup \{(q,2p+1 )\}_{p \neq 0, q \notin \{1,n\}} \cup \{(i,t)\}_{i =1}^n$. Note that this set has size $(n-2)t + 2$ since each column except for column $t$ has $n-2$ initially infected vertices and column $t$ has $n$ initially infected vertices. 

Note that this set is identical to that in Subcase 1.1, with an additional column that is fully infected (see Fig.~\ref{fig:case2.1}) . Since this last column is in $B_1$, it does not impact how the column next to it infects other vertices. Thus, this set is sufficient to force the entire graph.
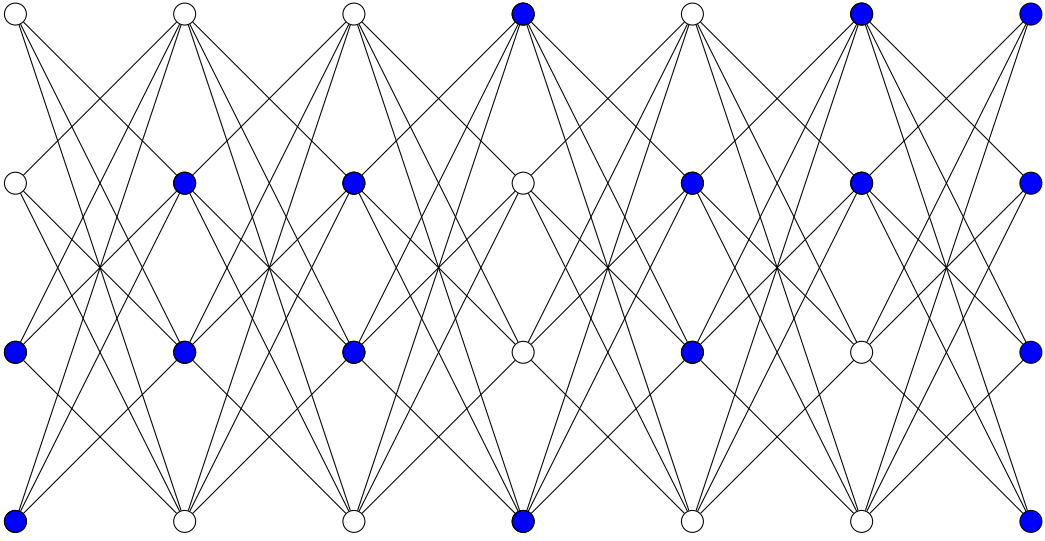
\begin{figure}
    \centering
    \begin{tikzpicture} [scale=2.25]
    \begin{scope} [every node/.style={scale=.75,circle,draw}]
    \begin{scope} [every node/.style={scale=.75,circle,draw, fill=blue}]
    
        \node (A) at (0, 0) {};
        \node (B) at (0, 1) {};
        \node (F) at (1, 1) {};
        \node (G) at (1, 2) {};
        \node (J) at (2, 1) {};
        \node (K) at (2, 2) {};
        \node (P) at (3, 3) {};
        \node (M) at (3, 0) {};
        \node (R) at (4, 1) {};
        \node (S) at (4, 2) {};
        \node (W) at (5, 2) {};
        \node (X) at (5, 3) {};
        \node (Y) at (6, 0) {};
        \node (Z) at (6, 1) {};
        \node (AA) at (6, 2) {};
        \node (AB) at (6, 3) {};
        
    \end{scope}
    
        \node (A) at (0, 0) {};
        \node (B) at (0, 1) {};
        \node (C) at (0, 2) {};
        \node (D) at (0, 3) {};

        \node (E) at (1, 0) {};
        \node (F) at (1, 1) {};
        \node (G) at (1, 2) {};
        \node (H) at (1, 3) {};

        \node (I) at (2, 0) {};
        \node (J) at (2, 1) {};
        \node (K) at (2, 2) {};
        \node (L) at (2, 3) {};

        \node (M) at (3, 0) {};
        \node (N) at (3, 1) {};
        \node (O) at (3, 2) {};
        \node (P) at (3, 3) {};

        \node (Q) at (4, 0) {};
        \node (R) at (4, 1) {};
        \node (S) at (4, 2) {};
        \node (T) at (4, 3) {};

        \node (U) at (5, 0) {};
        \node (V) at (5, 1) {};
        \node (W) at (5, 2) {};
        \node (X) at (5, 3) {};

        \draw (A) -- (F);
        \draw (A) -- (G);
        \draw (A) -- (H);

        \draw (B) -- (E);
        \draw (B) -- (G);
        \draw (B) -- (H);

        \draw (C) -- (H);
        \draw (C) -- (F);
        \draw (C) -- (E);

        \draw (D) -- (G);
        \draw (D) -- (F);
        \draw (D) -- (E);

        \draw (H) -- (K);
        \draw (H) -- (J);
        \draw (H) -- (I);

        \draw (G) -- (L);
        \draw (G) -- (J);
        \draw (G) -- (I);

        \draw (F) -- (L);
        \draw (F) -- (K);
        \draw (F) -- (I);

        \draw (E) -- (L);
        \draw (E) -- (K);
        \draw (E) -- (J);

        \draw (L) -- (O);
        \draw (L) -- (N);
        \draw (L) -- (M);

        \draw (K) -- (P);
        \draw (K) -- (N);
        \draw (K) -- (M);

        \draw (J) -- (P);
        \draw (J) -- (O);
        \draw (J) -- (M);

        \draw (I) -- (P);
        \draw (I) -- (O);
        \draw (I) -- (N);

        \draw (P) -- (S);
        \draw (P) -- (R);
        \draw (P) -- (Q);

        \draw (O) -- (T);
        \draw (O) -- (R);
        \draw (O) -- (Q);

        \draw (N) -- (T);
        \draw (N) -- (S);
        \draw (N) -- (Q);

        \draw (M) -- (T);
        \draw (M) -- (S);
        \draw (M) -- (R);

        \draw (T) -- (W);
        \draw (T) -- (V);
        \draw (T) -- (U);

        \draw (S) -- (X);
        \draw (S) -- (V);
        \draw (S) -- (U);

        \draw (R) -- (X);
        \draw (R) -- (W);
        \draw (R) -- (U);

        \draw (Q) -- (X);
        \draw (Q) -- (W);
        \draw (Q) -- (V);

        \draw (U) -- (Z);
        \draw (U) -- (AA);
        \draw (U) -- (AB);
                
        \draw (V) -- (Y);
        \draw (V) -- (AA);
        \draw (V) -- (AB);

        \draw (W) -- (Y);
        \draw (W) -- (Z);
        \draw (W) -- (AB);
                
        \draw (X) -- (Y);
        \draw (X) -- (AA);
        \draw (X) -- (Z);
        
    \end{scope}
    \end{tikzpicture}
    \caption{$B_1$ for $K_4 \times P_7$ consists of the blue vertices. This contains $B_1$ for $K_4 \times P_6$ in the first six columns.}
    \label{fig:case2.1}
\end{figure}

\noindent \textbf{Case 2.2: $n$ odd.} 

Let the initially infected set be vertices $\{(1, j)\}_{j = 3}^n \cup \{(2, j)\}_{j = 2}^{n - 1} \cup \{(i, j)\}_{i = 2k + 1 \neq t, j = 2}^{j = n - 1} \cup \{(i, j\}_{i = 4k, j \notin \{n - 1, n - 2\}} \cup \{(i, j)\}_{i = 4k + 2, j \notin \{2, 3\} }\cup \{(t, j)\}_{j=1}^{n}$. Note that this set has size $(n-2)t + 2$ since each of the $t$ columns has $(n-2)$ colored vertices and column $t$ has an additional $2$ colored vertices. 

This set is identical to that in Subcase 1.2 with an additional column $t$ which is fully infected. Since this last column is in $B_1$, it does not impact how the column next to it infects other vertices. Thus, this set is sufficient to $1$-force the graph.

\end{proof}

In general, we conjecture that $K_n \times P_t$ is not 2-resilient. For an intuition as to why, consider Fig.~\ref{fig:not2resilient}. As one can see, (using the same vertex labeling convention from the theorem) only $(2,1)$ and $(5,4)$ are infected after one application of the color change rule. No more vertices can be infected after this. With this configuration, we need at least one end column to force every vertex in the column next to it, implying all vertices in an end column are initially infected. There may, however, be other $l$-leaky forcing sets of size $(n-2)t$ or $(n-2)t+2$, depending on the parity of $t$, that contain all vertices in both end columns in an initial leaky forcing set and fewer vertices in the interior columns. However, if there are interior columns with fewer initially colored vertices, they might not be able to be forced since the vertices in columns on either side of it now have more neighbors that are not colored, so fewer of them might be able to force. The authors of this work have not been able to find such a set. Thus, we conjecture that a $2$-leaky forcing set of $K_n \times P_t$ is larger than $(n-2)t$ or $(n-2)t+2$, implying $K_n \times P_t$ is not 2-resilient.
\begin{figure}
    \centering
    \begin{tikzpicture} [scale=2.75]
    \begin{scope} [every node/.style={scale=.75,circle,draw}]
    \begin{scope} [every node/.style={scale=.75,circle,draw, fill=blue}]
    
        \node (A) at (0, 0) {};
        \node (F) at (1, 1) {};
        \node (G) at (1, 2) {};
        \node (J) at (2, 1) {};
        \node (K) at (2, 2) {};
        \node (P) at (3, 3) {};
        \node (M) at (3, 0) {};
        \node (R) at (4, 1) {};
        \node (S) at (4, 2) {};
        \node (X) at (5, 3) {};
        
    \end{scope}
      \begin{scope} [every node/.style={scale=.75,circle,draw, fill=red}]
    
        \node (B) at (0, 1) {};
        \node (W) at (5, 2) {};
        
    \end{scope}
    
        \node (A) at (0, 0) {};
        \node (B) at (0, 1) {};
        \node (C) at (0, 2) {};
        \node (D) at (0, 3) {};

        \node (E) at (1, 0) {};
        \node (F) at (1, 1) {};
        \node (G) at (1, 2) {};
        \node (H) at (1, 3) {};

        \node (I) at (2, 0) {};
        \node (J) at (2, 1) {};
        \node (K) at (2, 2) {};
        \node (L) at (2, 3) {};

        \node (M) at (3, 0) {};
        \node (N) at (3, 1) {};
        \node (O) at (3, 2) {};
        \node (P) at (3, 3) {};

        \node (Q) at (4, 0) {};
        \node (R) at (4, 1) {};
        \node (S) at (4, 2) {};
        \node (T) at (4, 3) {};

        \node (U) at (5, 0) {};
        \node (V) at (5, 1) {};
        \node (W) at (5, 2) {};
        \node (X) at (5, 3) {};

        \draw (A) -- (F);
        \draw (A) -- (G);
        \draw (A) -- (H);

        \draw (B) -- (E);
        \draw (B) -- (G);
        \draw (B) -- (H);

        \draw (C) -- (H);
        \draw (C) -- (F);
        \draw (C) -- (E);

        \draw (D) -- (G);
        \draw (D) -- (F);
        \draw (D) -- (E);

        \draw (H) -- (K);
        \draw (H) -- (J);
        \draw (H) -- (I);

        \draw (G) -- (L);
        \draw (G) -- (J);
        \draw (G) -- (I);

        \draw (F) -- (L);
        \draw (F) -- (K);
        \draw (F) -- (I);

        \draw (E) -- (L);
        \draw (E) -- (K);
        \draw (E) -- (J);

        \draw (L) -- (O);
        \draw (L) -- (N);
        \draw (L) -- (M);

        \draw (K) -- (P);
        \draw (K) -- (N);
        \draw (K) -- (M);

        \draw (J) -- (P);
        \draw (J) -- (O);
        \draw (J) -- (M);

        \draw (I) -- (P);
        \draw (I) -- (O);
        \draw (I) -- (N);

        \draw (P) -- (S);
        \draw (P) -- (R);
        \draw (P) -- (Q);

        \draw (O) -- (T);
        \draw (O) -- (R);
        \draw (O) -- (Q);

        \draw (N) -- (T);
        \draw (N) -- (S);
        \draw (N) -- (Q);

        \draw (M) -- (T);
        \draw (M) -- (S);
        \draw (M) -- (R);

        \draw (T) -- (W);
        \draw (T) -- (V);
        \draw (T) -- (U);

        \draw (S) -- (X);
        \draw (S) -- (V);
        \draw (S) -- (U);

        \draw (R) -- (X);
        \draw (R) -- (W);
        \draw (R) -- (U);

        \draw (Q) -- (X);
        \draw (Q) -- (W);
        \draw (Q) -- (V);
        
    \end{scope}
    \end{tikzpicture}
    \caption{An intuition as to why $K_n \times P_t$ is not 2-resilient. $B_1$ consists of the blue and red vertices, however the red vertices are leaky.}
    \label{fig:not2resilient}
\end{figure}
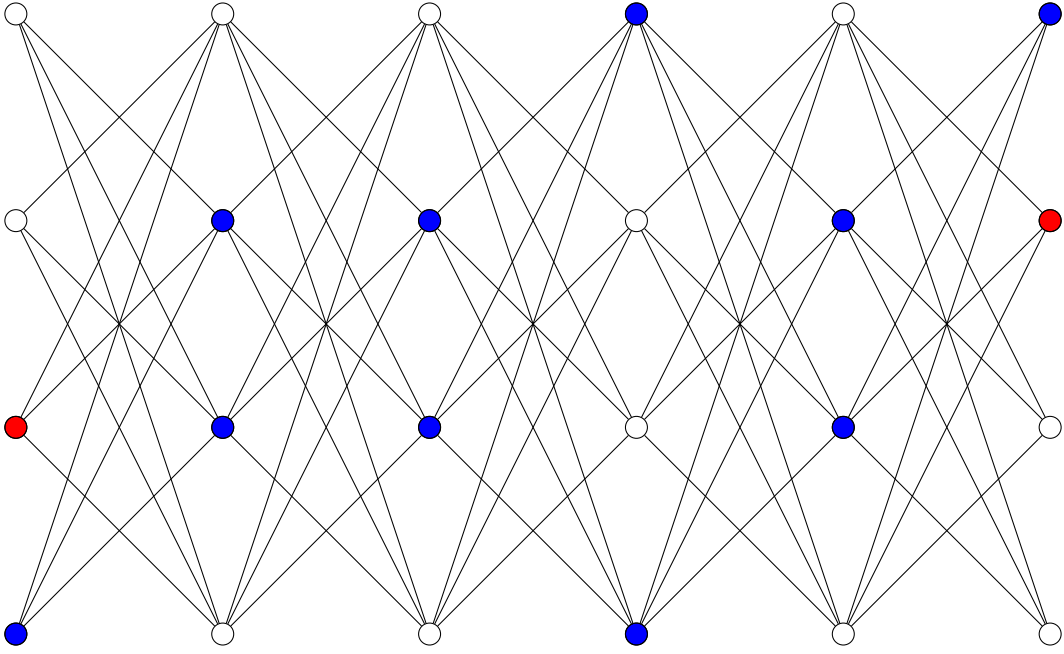

\begin{conjecture}
    $K_n \times P_t$ is not 2-resilient.
\end{conjecture}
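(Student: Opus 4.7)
The plan is to prove the conjecture by contradiction. Assume $B$ is a 2-leaky forcing set with $|B| = (n-2)t$ (the case $t$ even; the $t$ odd case is analogous with target size $(n-2)t+2$). For each column $j$, write $c_j = |B \cap \text{column } j|$ and let $U_j \subseteq \{1,\ldots,n\}$ be the set of uncolored rows in column $j$. The first step is a balance lemma showing that any minimum 2-leaky forcing set must satisfy $c_j = n-2$ for every $j$, so $|U_j| = 2$ throughout. Since $\sum_j c_j = (n-2)t$, it suffices to rule out $c_j \le n-3$ in any column: one shows that such a ``deficient'' column has $|U_j| \ge 3$, and then two leaks placed in columns $j-1$ and $j+1$ on rows inside $U_j$ can prevent any vertex of $B$ from ever forcing into column $j$, since every interior vertex needs one adjacent column to be essentially fully colored at the forced row.

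Given the balance lemma, the next step is a propagation analysis. An interior vertex $(i,j) \in B$ has $|U_{j-1} \setminus \{i\}| + |U_{j+1} \setminus \{i\}| \ge 2$ uncolored neighbors, so no interior vertex can be an initial forcer. Consequently, every initial force must come from an end column. A vertex $(i,1) \in B$ can force iff $i \in U_2$, in which case it forces $(U_2 \setminus \{i\},\,2)$; hence there are at most $|U_2|=2$ initial forcers in column $1$, and symmetrically at most two in column $t$, giving at most four initiators total.

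To close the contradiction, I place the two leaks strategically based on the number of initiators. If there are at most two initiators across both end columns, placing the leaks on them kills every initial forcing move, so no vertex outside $B$ is ever colored---contradicting that $B$ is a 2-leaky forcing set. If there are three or four initiators, I place one leak in each end column at positions chosen so that the wave of forces launched by the unblocked initiator(s) on one side---which, as in the analysis of the leak-in-column-$1$-or-$t$ subcase of Theorem~\ref{thm:pathcomplete}, alternates direction column-by-column and travels toward the opposite end---is intercepted by the leak placed in that opposite end column, leaving at least one vertex permanently uncolored.

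The main obstacle is the final case of four initiators, where two leaks cannot disable all starter vertices outright. Making the plan rigorous requires showing that the rigid column-by-column forcing pattern forced by $|U_j| = 2$ everywhere, together with the rigidity of the $K_n$-factor, guarantees that a single well-placed leak in each end column both eliminates one initiator and later intercepts the wave arriving from the other side. If this propagation-rigidity lemma can be established for every configuration of $B$, the conclusion $Z_2(K_n \times P_t) \ge (n-2)t + 1 > Z_0(K_n \times P_t)$ follows, confirming the conjecture.
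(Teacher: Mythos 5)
First, note that the statement you are proving is stated in the paper only as a \emph{conjecture}: the authors give an informal intuition based on one example (Fig.~\ref{fig:not2resilient}) and explicitly admit they could not rule out all candidate sets, so there is no proof in the paper to compare against. Your submission is likewise a plan rather than a proof, and it has two gaps. The smaller one is in the ``balance lemma'': your justification (placing two leaks in columns $j-1$ and $j+1$) is the wrong mechanism, since leaks disable a vertex from \emph{forcing} but do not stop the other $n-1$ vertices of an adjacent column from forcing into the deficient column one vertex at a time. The lemma itself is salvageable by a cleaner, leak-free argument: if $|U_j|\geq 3$ at any time, every vertex of columns $j\pm 1$ retains at least $|U_j\setminus\{i\}|\geq 2$ uncolored neighbors in column $j$ alone, so nothing adjacent to column $j$ can ever force and $U_j$ never shrinks; hence every column needs $c_j\geq n-2$, which together with $\sum_j c_j=(n-2)t$ gives balance for $t$ even. (For $t$ odd the budget is $(n-2)t+2$, so balance genuinely fails in up to two columns and your initiator count changes; ``analogous'' is not enough there.)

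The fatal gap is the three-or-four-initiator case, which is exactly where the difficulty of the conjecture lives. Your plan is to place one leak in each end column so that it ``both eliminates one initiator and later intercepts the wave arriving from the other side,'' but this is precisely what fails for the sets constructed in Theorem~\ref{thm:pathcomplete}: Subcase 1.1.1 of that proof shows that with one leak in an end column, the surviving initiator on that side still completes its column and the process restarts from there, so the wave from the opposite end never needs to be ``intercepted.'' To defeat a balanced set with four initiators you would need both leaks to act against the same side, or a genuinely new structural argument showing that some balanced configuration of the $U_j$'s creates a bottleneck that two leaks can exploit; the ``propagation-rigidity lemma'' you invoke is named but neither stated precisely nor proved, and it is not clear it holds for every balanced $B$. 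Without it, the contradiction $Z_{(2)}(K_n\times P_t)>(n-2)t$ does not follow, and the conjecture remains open.
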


The graph $K_n \times C_t$, where $C_t$ denotes a cycle on $t$ vertices, contains $K_n \times P_t$ as a subgraph, but with the addition of $n(n-1)$ additional edges that connect vertices of the form $(1,i)$ and $(t,j)$ for $ij \in E(K_n)$. Next, we show $K_n \times C_t$ is $1$-resilient.

\begin{thm}
      $K_n \times C_t$ is $1$-resilient, thus 
    \[Z_1(K_n \times C_t) = Z_0(K_n \times C_t) = \begin{cases} 
           (n-2)t + 2 & t \; \mathrm{odd} \\
           (n-2)t + 4 & t\; \mathrm{even} \\
            \end{cases}.
    \]
\end{thm}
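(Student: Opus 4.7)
The plan is to reduce the cycle case to the path case of Theorem~\ref{thm:pathcomplete} by using fully-colored ``barrier'' columns that absorb the wrap-around edges of $C_t$. Since $Z_0(G) \le Z_1(G)$ always, and the value of $Z_0(K_n \times C_t)$ is known, it suffices to exhibit a $1$-leaky forcing set of the claimed cardinality.

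First I would handle $t$ odd: I would reuse the same initial set constructed in the $t$-odd case of Theorem~\ref{thm:pathcomplete}, which already fully colors column $t$ and has total size $(n-2)t + 2$. The key observation is that, since column $t$ is entirely colored from the start, the extra edges of $C_t$ (from column $1$ back to column $t$) only add already-colored neighbors to the column-$1$ vertices. Hence every force executed in the original path argument remains a valid force in the cycle, and the forcing sequence in columns $1,\ldots,t-1$ proceeds exactly as before, whether the single leak lies in column $t$ (trivial), in column $1$, or in an interior column.

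Next I would handle $t$ even: I would fully color both columns $1$ and $t$ (contributing $2n$ vertices) and apply the $t$-even path construction from Theorem~\ref{thm:pathcomplete} to the interior columns $2,\ldots,t-1$, which form an effective $P_{t-2}$ with $t-2$ even. This yields a set of size $2n + (n-2)(t-2) = (n-2)t + 4$. Because both cycle-adjacent end columns are fully colored, the wrap-around edges connect only already-colored vertices to already-colored vertices, so they cannot obstruct any force. As in the odd case, a leak in column $1$ or $t$ is absorbed trivially since those columns are already entirely blue, while a leak in an interior column reduces directly to the interior-leak argument of Theorem~\ref{thm:pathcomplete} applied to $P_{t-2}$.

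The hardest part, and the main obstacle I anticipate, is the bookkeeping needed to verify that no attempted force is obstructed by the extra cycle-neighbors that vertices in columns $1$ and $t$ acquire in $C_t$ but not in $P_t$. The choice of barrier columns is designed to make this verification routine, but it must be checked for each of the four parity combinations of $n$ and $t$, and in particular one must confirm that the ``ends'' of the reduced path can still begin the forcing chain from either direction once the boundary columns are fully colored. A secondary issue is the small-$n$ boundary case, especially $n=2$, which may require a separate argument since the even-$t$ subcase of Theorem~\ref{thm:pathcomplete} is stated only for $n\ge 3$.
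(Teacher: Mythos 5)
Your proposal is correct and follows essentially the same strategy as the paper: exhibit a $1$-leaky forcing set of the known zero forcing cardinality by fully coloring the end column(s) so that the wrap-around edges of $C_t$ contribute only already-colored neighbors, and then invoke the forcing sequences from the corresponding subcases of Theorem~\ref{thm:pathcomplete} on the remaining columns. The only cosmetic difference is that for even $t$ you relabel the interior as a fresh $P_{t-2}$ construction while the paper retains the interior columns of the original $P_t$ construction, but both reductions are justified by the same barrier-column observation.
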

    
\begin{proof}
Label the vertices of $K_n \times C_t$ as in the previous theorem. Benson, Ferrero, Flagg, Furst, Hogben, Vasilevska, and Wissman showed that $Z_0(K_n \times C_t) = (n-2)t + 4$ for even $t$, and $Z_0(K_n \times C_t) = (n-2)t + 2$ for odd $t$ \cite{benson2018zero} so we need only show that there is a construction of a set of size $(n-2)t + 2$ that $1$-forces for odd $t$, and a set of size  $(n-2)t + 4$ for even $t$.

There are $4$ cases: $t$ and $n$ are both even or both odd, or $t$ is even and $n$ is odd or $n$ is even and $t$ is odd.

\textbf{Case 1: $t$ is even and $n$ is even}

Let the initially infected set of vertices be $\{(i,1)\}_{i =1}^n \cup \{(j, t)\}_{j=1}^{n}  \cup \{(f, 2)\}_{f \notin \{1,n\}} \cup \{ (m, 2k) \}_{k \notin \{1, \frac{t}{2}\}, m \notin \{\frac{n}{2}, \frac{n}{2}+1\}} \cup \{(q,2p+1)\}_{p \neq 0, q \notin \{1,n\}}$. Note that this set has size $(n-2)t + 4$ since each of the $t$ columns has $n-1$ colored vertices, and columns $1$ and $t$ each have $2$ additional colored vertices. Then since columns $1$ and $t$ are fully colored, the coloring process for the interior columns is identical to that in Theorem~\ref{thm:pathcomplete} Subcase 1.1, so the graph may be fully colored. 

\textbf{Case 2: $t$ is even and $n$ is odd}

Let the initially infected set of vertices be $\{(j, 1)\}_{j = 1}^n \cup \{(j, 2)\}_{j = 2}^{n - 1} \cup \{(j, i)\}_{i = 2k + 1, j = 2}^{j = n - 1} \cup \{(j, i\}_{i = 4k, j \notin \{n - 1, n - 2\}} \cup \{(j, i)\}_{i = 4k + 2, i \neq t, j \notin \{2, 3\} }\cup \{(j, t)\}_{j=1}^{n}$. Note that this set has size $(n-2)t + 4$ since each of the $t$ columns has $n-1$ colored vertices, and columns $1$ and $t$ each have $2$ additional colored vertices. Then since columns $1$ and $t$ are fully colored, the coloring process for the interior columns is identical to that in Theorem~\ref{thm:pathcomplete} Subcase 1.2, so the graph may be fully colored. 

\textbf{Case 3: $t$ is odd and $n$ is even}

Let the initially infected set of vertices be $\{(i,1)\}_{i =3}^n \cup \{(j, t-1)\}_{j=1}^{n-2}  \cup \{(f, 2)\}_{f \notin \{1,n\}} \cup \{ (m, 2k) \}_{k \notin \{1, \frac{t-1}{2}\}, m \notin \{\frac{n}{2}, \frac{n}{2}+1\}} \cup \{(q,2p+1 )\}_{p \neq 0, q \notin \{1,n\}} \cup \{(i,t)\}_{i =1}^n$. Then this set has size $(n-2)t + 2$ since each of the $t$ columns has $n-2$ colored vertices, and column $t$ has $2$ additional colored vertices. Then since every vertex in column $t$ is colored, vertices in columns $1$ and $t-1$ have no uncolored neighbors in column $t$, and the coloring process is identical to that in Theorem~\ref{thm:pathcomplete} Subcase 2.1. Thus, this set can force the entire graph.

\textbf{Case 4: $t$ is odd and $n$ is odd}
Let the initially infected set be vertices $\{(j, 1)\}_{j = 3}^n \cup \{(j, 2)\}_{j = 2}^{n - 1} \cup \{(j, i)\}_{i = 2k + 1 \neq t, j = 2}^{j = n - 1} \cup \{(j, i\}_{i = 4k, j \notin \{n - 1, n - 2\}} \cup \{(j, i)\}_{i = 4k + 2, j \notin \{2, 3\} }\cup \{(j, t)\}_{j=1}^{n}$. Note that this set has size $(n-2)t + 2$ since each of the $t$ columns has $(n-2)$ colored vertices and column $t$ has an additional $2$ colored vertices. Then since column $t$ is fully colored, vertices in columns $1 and t-1$ have no uncolored neighbors in column $t$, and we have the same coloring pattern as in Theorem~\ref{thm:pathcomplete} Subcase 2.2. 
\end{proof}

Unlike $K_n \times P_t$, $K_n \times C_t$ could potentially be $2$-resilient for even $t$. This is because in the constructed $1$-forcing set above, two columns of vertices in the graph that are next to each other are both completely colored. Thus, if there is a single leak in both of these columns, there still exist vertices in these columns that can force others. This argument is less convincing for odd $t$, so it would be interesting to consider if the parity of $t$ impacts the resilience of the product. Thus, we ask

\begin{question}
Is $K_n \times C_t$ $2$-resilient for either even $t$, odd $t$, both, or neither?
\end{question}

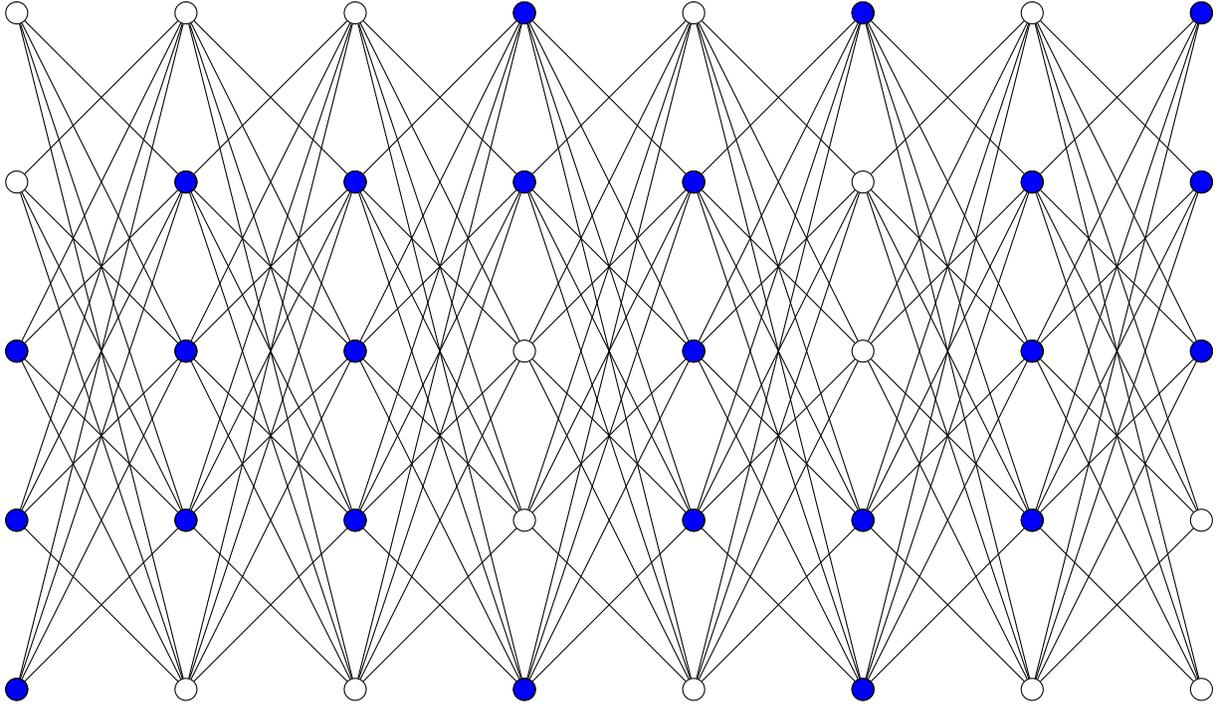
\begin{figure}
    \centering
    \begin{tikzpicture} [scale=2.25]
    \begin{scope} [every node/.style={scale=.75,circle,draw}]
    \begin{scope} [every node/.style={scale=.75,circle,draw, fill=blue}]

        \node (A) at (0, 0) {};
        \node (B) at (0, 1) {};
        \node (C) at (0, 2) {};
        \node (F) at (1, 1) {};
        \node (G) at (1, 2) {};
        \node (H) at (1, 3) {};
        \node (J) at (2, 1) {};
        \node (K) at (2, 2) {};
        \node (L) at (2, 3) {};
        \node (X) at (3, 4) {};
        \node (P) at (3, 3) {};
        \node (M) at (3, 0) {};
        \node (R) at (4, 1) {};
        \node (S) at (4, 2) {};
        \node (T) at (4, 3) {};
        \node (5) at (5, 4) {};
        \node (1) at (5, 0) {};
        \node (2) at (5, 1) {};
        \node (7) at (6, 1) {};
        \node (8) at (6, 2) {};
        \node (9) at (6, 3) {};
        \node (13) at (7, 2) {};
        \node (14) at (7, 3) {};
        \node (15) at (7, 4) {};
      
    \end{scope}
    
        \node (A) at (0, 0) {};
        \node (B) at (0, 1) {};
        \node (C) at (0, 2) {};
        \node (D) at (0, 3) {};

        \node (E) at (1, 0) {};
        \node (F) at (1, 1) {};
        \node (G) at (1, 2) {};
        \node (H) at (1, 3) {};

        \node (I) at (2, 0) {};
        \node (J) at (2, 1) {};
        \node (K) at (2, 2) {};
        \node (L) at (2, 3) {};

        \node (M) at (3, 0) {};
        \node (N) at (3, 1) {};
        \node (O) at (3, 2) {};
        \node (P) at (3, 3) {};

        \node (Q) at (4, 0) {};
        \node (R) at (4, 1) {};
        \node (S) at (4, 2) {};
        \node (T) at (4, 3) {};

        \node (U) at (0, 4) {};
        \node (V) at (1, 4) {};
        \node (W) at (2, 4) {};
        \node (X) at (3, 4) {};
        \node (Y) at (4, 4) {};

        \node (1) at (5, 0) {};
        \node (2) at (5, 1) {};
        \node (3) at (5, 2) {};
        \node (4) at (5, 3) {};
        \node (5) at (5, 4) {};

        \node (6) at (6, 0) {};
        \node (7) at (6, 1) {};
        \node (8) at (6, 2) {};
        \node (9) at (6, 3) {};
        \node (10) at (6, 4) {};

        \node (11) at (7, 0) {};
        \node (12) at (7, 1) {};
        \node (13) at (7, 2) {};
        \node (14) at (7, 3) {};
        \node (15) at (7, 4) {};

        \draw (A) -- (F);
        \draw (A) -- (G);
        \draw (A) -- (H);

        \draw (B) -- (E);
        \draw (B) -- (G);
        \draw (B) -- (H);

        \draw (C) -- (H);
        \draw (C) -- (F);
        \draw (C) -- (E);

        \draw (D) -- (G);
        \draw (D) -- (F);
        \draw (D) -- (E);

        \draw (H) -- (K);
        \draw (H) -- (J);
        \draw (H) -- (I);

        \draw (G) -- (L);
        \draw (G) -- (J);
        \draw (G) -- (I);

        \draw (F) -- (L);
        \draw (F) -- (K);
        \draw (F) -- (I);

        \draw (E) -- (L);
        \draw (E) -- (K);
        \draw (E) -- (J);

        \draw (L) -- (O);
        \draw (L) -- (N);
        \draw (L) -- (M);

        \draw (K) -- (P);
        \draw (K) -- (N);
        \draw (K) -- (M);

        \draw (J) -- (P);
        \draw (J) -- (O);
        \draw (J) -- (M);

        \draw (I) -- (P);
        \draw (I) -- (O);
        \draw (I) -- (N);

        \draw (P) -- (S);
        \draw (P) -- (R);
        \draw (P) -- (Q);

        \draw (O) -- (T);
        \draw (O) -- (R);
        \draw (O) -- (Q);

        \draw (N) -- (T);
        \draw (N) -- (S);
        \draw (N) -- (Q);

        \draw (M) -- (T);
        \draw (M) -- (S);
        \draw (M) -- (R);

        \draw (U) -- (H);
        \draw (U) -- (G);
        \draw (U) -- (F);
        \draw (U) -- (E);

        \draw (V) -- (D);
        \draw (V) -- (C);
        \draw (V) -- (B);
        \draw (V) -- (A);
        \draw (V) -- (L);
        \draw (V) -- (K);
        \draw (V) -- (J);
        \draw (V) -- (I);

        \draw (W) -- (H);
        \draw (W) -- (G);
        \draw (W) -- (F);
        \draw (W) -- (E);
        \draw (W) -- (P);
        \draw (W) -- (O);
        \draw (W) -- (N);
        \draw (W) -- (M);

        \draw (X) -- (L);
        \draw (X) -- (K);
        \draw (X) -- (J);
        \draw (X) -- (I);
        \draw (X) -- (T);
        \draw (X) -- (S);
        \draw (X) -- (R);
        \draw (X) -- (Q);

        \draw (Y) -- (P);
        \draw (Y) -- (O);
        \draw (Y) -- (N);
        \draw (Y) -- (M);
        
        \draw (Y) -- (4);
        \draw (Y) -- (3);
        \draw (Y) -- (2);
        \draw (Y) -- (1);

        \draw (T) -- (5);
        \draw (T) -- (3);
        \draw (T) -- (2);
        \draw (T) -- (1);

        \draw (S) -- (5);
        \draw (S) -- (4);
        \draw (S) -- (2);
        \draw (S) -- (1);

        \draw (R) -- (5);
        \draw (R) -- (4);
        \draw (R) -- (3);
        \draw (R) -- (1);

        \draw (Q) -- (5);
        \draw (Q) -- (4);
        \draw (Q) -- (3);
        \draw (Q) -- (2);

        \draw (5) -- (9);
        \draw (5) -- (8);
        \draw (5) -- (7);
        \draw (5) -- (6);

        \draw (4) -- (10);
        \draw (4) -- (8);
        \draw (4) -- (7);
        \draw (4) -- (6);

        \draw (3) -- (10);
        \draw (3) -- (9);
        \draw (3) -- (7);
        \draw (3) -- (6);

        \draw (2) -- (10);
        \draw (2) -- (9);
        \draw (2) -- (8);
        \draw (2) -- (6);

        \draw (1) -- (10);
        \draw (1) -- (9);
        \draw (1) -- (8);
        \draw (1) -- (7);

        \draw (10) -- (14);
        \draw (10) -- (13);
        \draw (10) -- (12);
        \draw (10) -- (11);

        \draw (9) -- (15);
        \draw (9) -- (12);
        \draw (9) -- (13);
        \draw (9) -- (11);

        \draw (8) -- (15);
        \draw (8) -- (14);
        \draw (8) -- (12);
        \draw (8) -- (11);

        \draw (7) -- (15);
        \draw (7) -- (14);
        \draw (7) -- (13);
        \draw (7) -- (11);

        \draw (6) -- (15);
        \draw (6) -- (14);
        \draw (6) -- (13);
        \draw (6) -- (12);

    \end{scope}
    \end{tikzpicture}
    \caption{$B_1$ for $K_5 \times P_8$ consists of the blue vertices.}
    \label{fig:Theorem-2.1.2}
\end{figure}

Next, we turn our attention to products of $K_n$ with itself.

\begin{thm}
    $Z_{(1)}(K_n \times  K_n) = n(n-2) +2(n-2) = n^2-4$ for all $n \geq 3$.
\end{thm}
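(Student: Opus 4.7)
The plan is to prove matching bounds.

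For the upper bound $Z_{(1)}(K_n\times K_n)\le n^2-4$, I would take $U=\{(1,1),(1,2),(2,1),(n,n)\}$ and verify that $B=V(K_n\times K_n)\setminus U$ is a $1$-leaky forcing set. In the direct product the non-neighbors of $(a,b)$ are exactly row $a\cup$ column $b$ minus $(a,b)$, so $(1,n)$ sees three of the four uncolored vertices as non-neighbors --- $(1,1),(1,2)$ by row $1$ and $(n,n)$ by column $n$ --- leaving $(2,1)$ as its unique uncolored neighbor; hence $(1,n)\to(2,1)$, and symmetrically $(n,1)\to(1,2)$. After these two initial forces only $(1,1)$ and $(n,n)$ remain, and each admits many forcers (any blue vertex in row $n$ or column $n$ other than $(1,n),(n,1),(n,n)$ has $(1,1)$ as its unique uncolored neighbor, and symmetrically for $(n,n)$). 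For a single leak $\ell$, only $\ell\in\{(1,n),(n,1)\}$ disables an initial force; in either case the surviving initial force, together with second-round forcers from the opposite corners, always completes the coloring (the small case $n=3$ needs an extra round but works analogously).

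For the lower bound I would prove the stronger $Z_0(K_n\times K_n)\ge n^2-4$, so that $Z_{(1)}\ge Z_0\ge n^2-4$ follows for free. The key arithmetic fact is that for any blue $v=(a,b)\notin U$ the uncolored-neighbor count of $v$ equals $|U|-R_a-C_b$ where $R_a=|U\cap\mathrm{row}\,a|$ and $C_b=|U\cap\mathrm{col}\,b|$, so $v$ forces iff $R_a+C_b=|U|-1$; and since the vertex $u$ that it forces satisfies $u\notin\mathrm{row}\,a\cup\mathrm{col}\,b$, the values $R_a,C_b$ are unchanged by the force, so $v$'s new count drops to $0$ and $v$ never forces again. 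For $|U|=5$ I would enumerate the row-sum profiles (up to permutation: $(5),(4,1),(3,2),(3,1,1),(2,2,1),(2,1,1,1),(1,1,1,1,1)$) and apply the criterion $R_a+C_b=4$. For the profiles $(5),(2,2,1),(2,1,1,1),(1,1,1,1,1)$ (and many sub-configurations of the others) there is no initial forcer at all, so the process stalls immediately. In the remaining cases an initial force $v=(a,b)\to u$ leaves $U'$ split as $p=R_a$ elements in row $a$ and $q=C_b$ elements in column $b$ with $p+q=4$, and re-applying the criterion at $R'_{a'}+C'_{b'}=3$ shows that a round-$2$ forcer exists only when $(p,q)\in\{(3,1),(1,3)\}$ (and then only when $n\ge 5$ supplies an unused column/row). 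In each such case the round-$2$ force leaves a row-$a$ or column-$b$ residual of exactly three uncolored vertices, which stalls at round $3$ by the same arithmetic.

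The main obstacle is the bookkeeping in the lower-bound case analysis. Parallel initial forcers --- for instance in the $(3,2)$ profile with disjoint row-column supports, where $(1,c_4)$ and $(1,c_5)$ fire simultaneously --- simply reduce $U$ to a row-of-three residual in one round, which stalls by the same criterion. Small $n$ eliminates some profiles outright (e.g.\ the $(5)$ profile requires $n\ge 5$) and eliminates round-$2$ forcers in the $(3,1),(1,3)$ cases by removing the unused column, yielding earlier stalls. Throughout, the single arithmetic identity $|N(v)\cap U|=|U|-R_a-C_b$ carries the entire lower-bound argument, with no additional structural tool needed.
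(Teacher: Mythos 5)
Your upper bound is sound but takes a different construction from the paper's. The paper (for $n\ge 4$) colors everything except $(1,n-1),(1,n),(n,1),(n,2)$, which produces \emph{four} simultaneous initial forces with four distinct targets, so a single leak kills at most one force and the leak analysis is immediate; you color everything except $(1,1),(1,2),(2,1),(n,n)$, which produces only \emph{two} initial forces, so when the leak sits on $(1,n)$ or $(n,1)$ you must chase a longer chain (and handle $n=3$ separately, as you note). Both work; the paper's choice just makes the leak case trivial. Note also that the paper does not prove the lower bound at all: it cites $Z_0(K_n\times K_n)=n^2-4$ from Huang, Chang and Yeh, and uses $Z_{(1)}\ge Z_{(0)}$.

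The genuine gap is in your lower-bound case analysis. Your arithmetic identity $|N(v)\cap U|=|U|-R_a-C_b$ is correct, but the claim that the row-sum profiles $(2,2,1)$, $(2,1,1,1)$ and $(1,1,1,1,1)$ admit no initial forcer is false: whether $R_a+C_b=4$ is achievable off $U$ depends on the joint row--column incidence structure, not on the row profile alone. Concretely, take $U=\{(1,1),(1,2),(2,3),(2,4),(3,1)\}$, which has row profile $(2,2,1)$; then $(2,1)\notin U$ has $R_2=2$ and $C_1=2$, so $(2,1)$ forces $(1,2)$ and the process does not stall immediately. Similarly $U=\{(1,1),(1,2),(2,1),(3,1),(4,3)\}$ (profile $(2,1,1,1)$) admits the forcer $(4,1)$, and $U=\{(1,1),(2,1),(3,1),(4,1),(5,2)\}$ with $n\ge 6$ (profile $(1,1,1,1,1)$) admits the forcer $(6,1)$. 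In each of these the process does eventually stall, but establishing that requires continuing the analysis through further rounds, so the enumeration must be organized around the pair of row and column profiles (equivalently the bipartite incidence structure of $U$) and carried to termination in every branch. As written, the lower-bound argument would not survive refereeing; the cleanest fix is to drop it entirely and invoke the known value of $Z_0(K_n\times K_n)$, exactly as the paper does.
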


\begin{proof}
In \cite{huang2010minimum}, Huang, Chang, and Yeh show that $Z_0(K_n \times  K_n) = n^2-4$, thus we need only find a construction of a $1-$leaky forcing set of this cardinality to prove the result.

Label the vertices of $K_n \times K_n$ as $(i,j)$ where $i,j \in [n].$ For $n \geq 4$, add all vertices to $B_1$ except $(1,n-1)$, $(1,n)$, $(n,1)$ and $(n, 2)$. The vertex $(1,2)$ can force $(n,1)$, $(1,1)$ can force $(n,2)$, $(n,n)$ can force $(1,n-1)$, and $(n,n-1)$ can force $(1,n)$. Since there is at most one leak, at least three of these vertices not in $B_1$ are forced in the first step. Once three of these vertices have been forced, the last is forced in the next step, as all but one of its $(n-1)^2$ neighbors can then force it.

For $n=3$, consider the set colored in Fig.~\ref{fig:k3timesk3}. With this initial set, either $(2,1)$, $(2,3)$, or both are colored after the first application of the color change rule (colored by either $(3,3)$ or $( 1,1 )$ respectively). If both are colored, $(1,3)$ and $(3,1)$ are both colored on the next application of the color change rule since they each have two colored neighbors and no neighbors in common. If only $(2,1)$ is colored after the first application of the color change rule, then $(1,1)$ is leaky. $(2,1)$ can then infect $(1,3)$ on the second application of the color change rule, then $(1,3)$ can infect $(3,1)$, which in turn can infect $(2,3)$. Similarly, if only $(2,1)$ is infected in the first application of the color change rule, then $(3,3)$ has the leak. $(1,1)$ can then infect $(2,3)$, which in turn infect $(3,1)$. Then $(3,1)$ can infect $(1,3)$.
\end{proof}
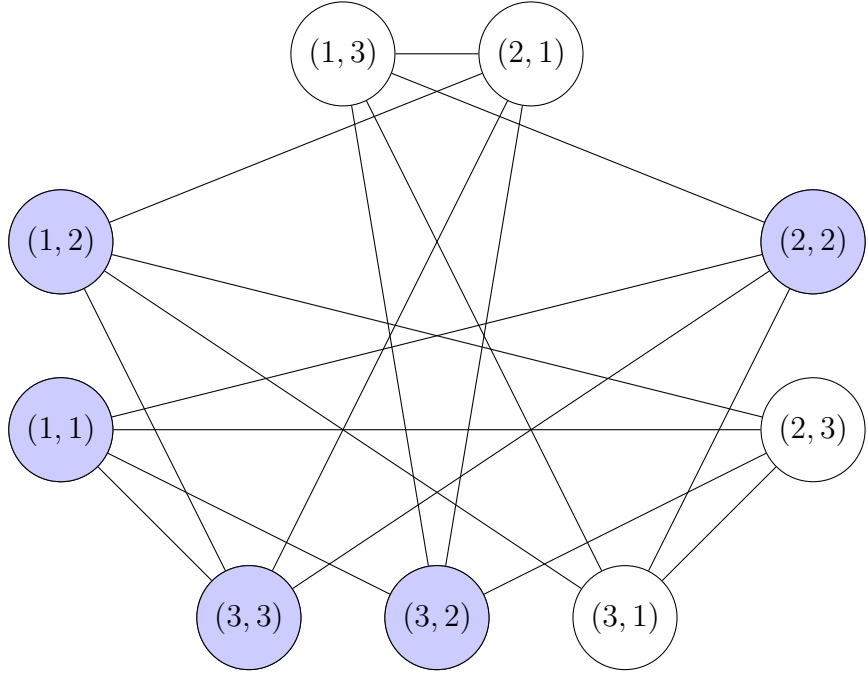
\begin{figure}
    \centering
    \begin{tikzpicture} [scale=2.5]
    \begin{scope} [every node/.style={scale=1,circle,draw}]
    
        \node (A) at (-0.5, 0) {$(1,1)$};
        \node (B) at (-0.5, 1) {$(1,2)$};
        \node (C) at (1, 2) {$(1,3)$};
        \node (D) at (2, 2) {$(2,1)$};
        \node (F) at (3.5, 1) {$(2,2)$};
        \node (G) at (3.5, 0) {$(2,3)$};
        \node (H) at (2.5, -1) {$(3,1)$};
        \node (J) at (1.5, -1) {$(3,2)$};
        \node (K) at (0.5, -1) {$(3,3)$};
         
    \end{scope}

    \begin{scope} [every node/.style={scale=1,circle,draw, fill=blue!20}]
    
        \node (A) at (-0.5, 0) {$(1,1)$};
        \node (B) at (-0.5, 1) {$(1,2)$};
        \node (F) at (3.5, 1) {$(2,2)$};
        \node (J) at (1.5, -1) {$(3,2)$};
        \node (K) at (0.5, -1) {$(3,3)$};
         
    \end{scope}

        \draw (A) -- (F);
        \draw (A) -- (G);
        \draw (A) -- (J);
         \draw (A) -- (K);

         \draw (B) -- (D);
        \draw (B) -- (G);
        \draw (B) -- (H);
         \draw (B) -- (K);

        \draw (C) -- (F);
        \draw (C) -- (D);
        \draw (C) -- (H);
         \draw (C) -- (J);

        \draw (D) -- (J);
        \draw (D) -- (K);
        \draw (F) -- (H);
         \draw (F) -- (K);
          \draw (G) -- (H);
         \draw (G) -- (J);

    \end{tikzpicture}
    \caption{$K_3 \times K_3$. The blue shaded vertices form a (not unique) $1$-leaky forcing set.}
    \label{fig:k3timesk3}
\end{figure}

Clearly, $Z_0(K_n) = n-1$. Hence, the previous result shows that in general, $Z_{(\ell)}(G \times H)$ may not necessarily be less than $Z_{(\ell)}(G)Z_{(\ell)}(H)$. Thus, we ask
\begin{question}
    Does there exist any two connected graphs $G$ and $H$, aside from $G = K_2$ and $H$ bipartite, such that $Z_{(\ell)}(G \times H) \leq Z_{(\ell)}(G)Z_{(\ell)}(H)$?
\end{question}
When $G = K_2$ and when $H$ is bipartite, $G \times H$ results in two disjoint copies of $H$, which clearly satisfies the inequality. If the question holds true for other pairs of graphs, a natural research topic would be to characterize all graphs that satisfy this property.

\section{Conclusion}\label{sec:conclusion}
In this work, we showed that $K_n \times P_t$ and $K_t \times C_t$ are both $1$-resilient and conjecture that they are not $2$-resilient. Interestingly, every Cartesian product $K_n \times H$ for $n \geq 3$ that we have considered has been $1$-resilient. Thus, we ask 

\begin{question}
    Are all Cartesian products $K_n \times H$ for $n \geq 3$ $1$-resilient?
\end{question}
In order to prove this is not true, one need show that there exists a graph $H$ such that $Z_{(1)}(K_n \times H) > Z_{(0)}(K_n \times H)$.

In \cite{dillman2019leaky}, Dillman and Kenter show that $Z_{(2)}(Q_3) = 6$ and $Z_{(3)}(Q_4) = 10$, the latter shown by exhaustive computer search. Both of these values are equal to $2^{d-2}+2$. Thus, we ask
\begin{question}
    Does $Z_{(d-1)}(Q_d) = 2^{d-2} + 2$ for all $d \geq 3$?
\end{question}

In fact, $Q_5$ appears to have a $4$-leaky forcing set of cardinality eighteen. Consider the initially infected set found in Fig.~\ref{fig:cube5}. This set appears to be $4$-leaky forcing, however a formal proof or exhaustive computer search is still needed to verify this. If it is $4$-leaky forcing, it is interesting to note that this blue set contains a $3$-leaky forcing set of $Q_4$, which in turn contains a $2$-leaky forcing set of $Q_3$. This leads to another question
\begin{question}
    Does there exist an infinite family of graphs, $F_n$, such that $$B_0 (F_k) \subset B_1(F_{k+1}) \subset B_2(F_{k+2}) \ldots \mathrm{?}$$
\end{question}

    \begin{figure}
\begin{tikzpicture}[scale=1.5]
	\begin{scope}[every node/.style={scale=0.75,circle,fill=blue,draw}]
    \node (A) at (0,0) {};
    \node (B) at (0,2) {};
	\node (C) at (2,0) {}; 
	\node (D) at (2,2) {};
	\node (E) at (1,2.6) {}; 
	\node (F) at (1,0.6) {};

    \node (I) at (5,0) {};
    \node (J) at (5,2) {};
	\node (L) at (7,2) {};
	\node (N) at (6,0.6) {};
	\node (O) at (8,.6) {}; 

	\node (S) at (2,-4.6) {}; 
	\node (T) at (2,-2.6) {};
	\node (V) at (1,-4) {};
	\node (W) at (3,-2) {}; 

    \node (Z) at (5,-2.6) {};
	\node (AB) at (7,-2.6) {};
	\node (AE) at (8,-2) {}; 

\end{scope}

	\begin{scope}[every node/.style={scale=0.75,circle,fill=none,draw}]
	\node (E) at (1,2.6) {}; 
	\node (F) at (1,0.6) {};
	\node (G) at (3,.6) {}; 
	\node (H) at (3,2.6) {};
        \node (K) at (7,0) {};
        \node (M) at (6,2.6) {}; 
        \node (P) at (8,2.6) {};

             \node (Q) at (0,-4.6) {};
    \node (R) at (0,-2.6) {};
	\node (S) at (2,-4.6) {}; 
	\node (T) at (2,-2.6) {};
	\node (U) at (1,-2) {}; 
	\node (V) at (1,-4) {};
	\node (W) at (3,-2) {}; 
	\node (X) at (3,-4) {};

    \node (Y) at (5,-4.6) {};
    \node (Z) at (5,-2.6) {};
	\node (AA) at (7,-4.6) {}; 
	\node (AB) at (7,-2.6) {};
	\node (AC) at (6,-2) {}; 
	\node (AD) at (6,-4) {};
	\node (AE) at (8,-2) {}; 
	\node (AF) at (8,-4) {};

\end{scope}
		
        \draw    (A) to[out=-20,in=200] (I);
        \draw    (B) to[out=-20,in=200] (J);
        \draw    (C) to[out=-20,in=200] (K);
        \draw    (D) to[out=-20,in=200] (L);
        \draw    (E) to[out=20,in=160] (M);
        \draw    (F) to[out=20,in=160] (N);
        \draw    (G) to[out=20,in=160] (O);
        \draw    (H) to[out=20,in=160] (P);

        \draw    (Q) to[out=-20,in=200] (Y);
        \draw    (R) to[out=-20,in=200] (Z);
        \draw    (S) to[out=-20,in=200] (AA);
        \draw    (T) to[out=-20,in=200] (AB);
        \draw    (U) to[out=20,in=160] (AC);
        \draw    (V) to[out=20,in=160] (AD);
        \draw    (W) to[out=20,in=160] (AE);
        \draw    (X) to[out=20,in=160] (AF);
        
	\draw    (A) to[out=250,in=110] (Q);
        \draw    (B) to[out=250,in=110] (R);
        \draw    (E) to[out=250,in=110] (U);
        \draw    (F) to[out=250,in=110] (V);

        \draw    (C) to[out=290,in=70] (S);
        \draw    (D) to[out=290,in=70] (T);
        \draw    (G) to[out=290,in=70] (W);
        \draw    (H) to[out=290,in=70] (X);

        \draw    (I) to[out=250,in=110] (Y);
        \draw    (J) to[out=250,in=110] (Z);
        \draw    (M) to[out=250,in=110] (AC);
        \draw    (N) to[out=250,in=110] (AD);

        \draw    (K) to[out=290,in=70] (AA);
        \draw    (L) to[out=290,in=70] (AB);
        \draw    (O) to[out=290,in=70] (AE);
        \draw    (P) to[out=290,in=70] (AF);

	\draw (A) -- (B);
	\draw (A) -- (C);
	\draw (A) -- (F);
	\draw (B) -- (D);
	\draw (B) -- (E);
	\draw (C) -- (D);
	\draw (C) -- (G);
	\draw (D) -- (H);
	\draw (E) -- (F);
	\draw (E) -- (H);
	\draw (F) -- (G);
	\draw (G) -- (H);

	\draw (I) -- (J);
	\draw (I) -- (K);
	\draw (I) -- (N);
	\draw (J) -- (L);
	\draw (J) -- (M);
	\draw (K) -- (L);
	\draw (K) -- (O);
	\draw (L) -- (P);
	\draw (M) -- (N);
	\draw (M) -- (P);
	\draw (N) -- (O);
	\draw (O) -- (P);

	\draw (Q) -- (R);
	\draw (Q) -- (S);
	\draw (Q) -- (V);
	\draw (R) -- (T);
	\draw (R) -- (U);
	\draw (S) -- (T);
	\draw (S) -- (X);
	\draw (T) -- (W);
	\draw (U) -- (V);
	\draw (U) -- (W);
	\draw (V) -- (X);
	\draw (W) -- (X);

	\draw (Y) -- (Z);
	\draw (Y) -- (AA);
	\draw (Y) -- (AD);
	\draw (Z) -- (AB);
	\draw (Z) -- (AC);
	\draw (AA) -- (AB);
	\draw (AA) -- (AF);
	\draw (AB) -- (AE);
	\draw (AC) -- (AD);
	\draw (AC) -- (AE);
	\draw (AD) -- (AF);
	\draw (AE) -- (AF);

\end{tikzpicture}
\caption{$Q_5$, with a blue set that we think may be a $4$-leaky forcing set.}\label{fig:cube5}
\end{figure}
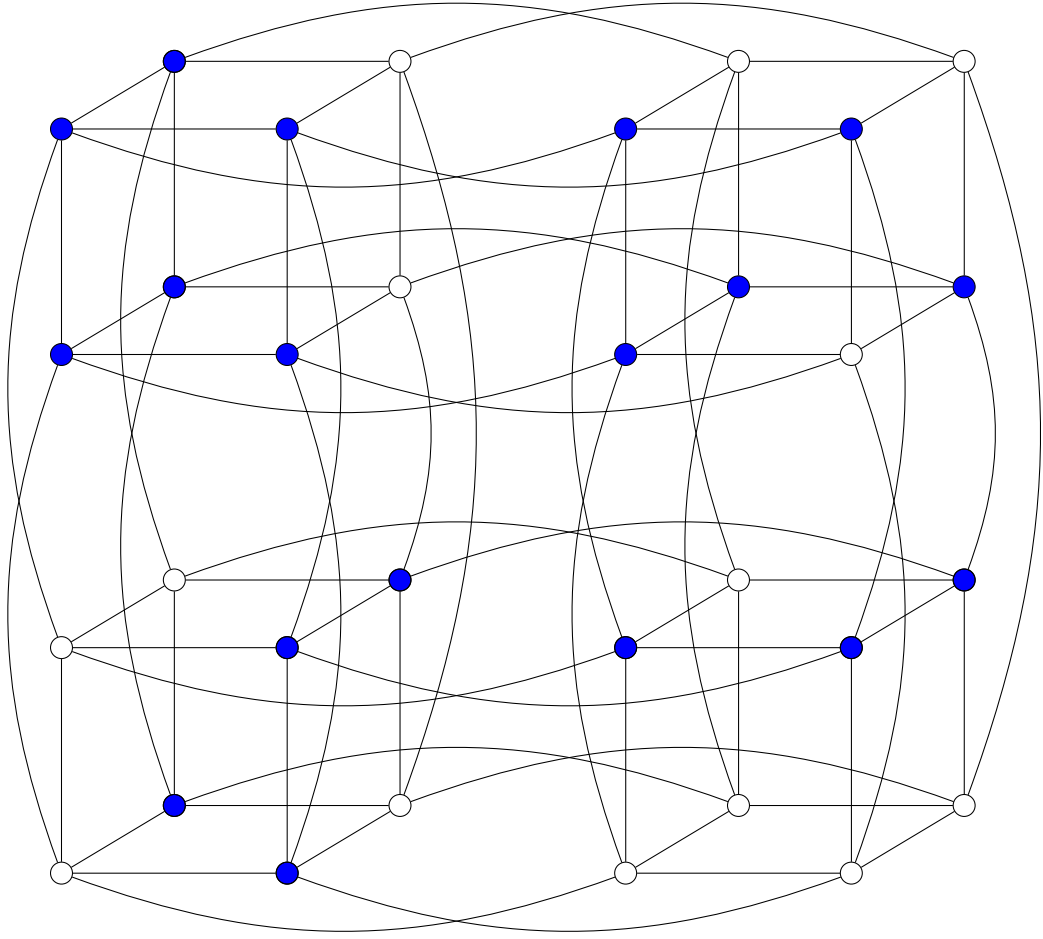





\section*{Acknowledgements}
G.W. was supported by University of Tennessee Knoxville Faculty Research Assistant Funding.

\bibliographystyle{abbrv}
\bibliography{LeakyForcing}

\end{document}